\title{Alperin's Fusion Theorem for localities}
\author{R\'emi Molinier}
\address{Univ. Grenoble Alpes, CNRS, Institut Fourier, F-38000 Grenoble, France}
\email{remi.molinier@univ-grenoble-alpes.fr}
\urladdr{}
\newtheorem{thm}{Theorem}[section]    
\newtheorem{lem}[thm]{Lemma}          
\newtheorem{prop}[thm]{Proposition}
\newtheorem{cor}[thm]{Corollary}
\newtheorem{thm*}{Theorem}
\newtheorem{prop*}{Proposition}
\theoremstyle{definition}
\newtheorem{defi}[thm]{Definition}
\newtheorem*{ex}{Example}
\numberwithin{equation}{subsection}
\newcommand{\Zp}{\mathbb{Z}_{(p)}}
\newcommand{\Db}{\mathbb{D}}
\newcommand{\Fc}{\mathcal{F}}
\newcommand{\Li}{\mathcal{L}}
\newcommand{\T}{\mathcal{T}}
\newcommand{\A}{\mathcal{A}}
\newcommand{\limproj}[1]{\lim\limits_{\substack{\longleftarrow \\ #1}}}
\newcommand{\Ob}{\text{Ob}}
\newcommand{\Hom}{\text{Hom}}
\newcommand{\Aut}{\text{Aut}}
\newcommand{\Mor}{\text{Mor}}
\newcommand{\Syl}{\text{Syl}}
\begin{document}

\begin{abstract}   
In these notes we give a version of the Alperin-Goldschmidt Fusion Theorem for localities.
\end{abstract}

\maketitle

The notion of a locality was introduced by Chermak \cite{Ch} to study $p$-local structures of finite groups and to prove the existence and uniqueness of linking systems associated to saturated fusion systems. Along the way he proved that there is a one-to-one correspondance between localities with saturated fusion system $\Fc$ and transporter systems associated to $\Fc$. Linking systems and transporter systems were introduced by Broto, Levi and Oliver in \cite{BLO2} and Oliver and Ventura in \cite{OV1} respectively. They introduced these categories to study $p$-completed classifying spaces of finite groups and to develop a theory of classifying spaces for saturated fusion systems. Localities give a more group-like point of view on these objects which allows us to use tools from group theory. This paper gives also an example where the setup of localities helps to prove results on transporter systems.  

We present here a version of the Alperin-Goldschmidt Fusion Theorem for localities which generalizes the Alperin-Goldschmidt Fusion Theorem for finite groups (a nice version can be found in \cite[Theorem 1]{St}). Chermak already gave a version of Alperin's Fusion Theorem for proper localities (i.e. localities which correspond to linking systems) in \cite[Proposition 2.17]{Ch}. Here to be able to work with any locality we have to relax a bit his notion of $\Li$-essential subgroups. 
We use this Theorem to get a generalization of the Alperin Fusion Theorem for transporter systems given by Oliver and Ventura in \cite{OV1}. We give an example at the end how this can be applied to calculate limits of functors over a transporter system and we use it with group cohomology.
\vspace{10pt}

\textbf{Acknowledgement.} The author is eternally grateful to Andy Chermak for so many discussions and teaching him so much about partial groups and localities. The author would also like to thank the referee for his fruitful comments.

\section{Localities and fusion systems}

The notions of partial group and locality are due to Chermak and we refer the reader to \cite[Section 2]{Ch} for the definitions and the basic properties. If $\Li$ is a partial group, we will denote by $\mathbb{D}(\Li)$ its domain and by $\Pi:\mathbb{D}(\Li)\rightarrow \Li$ its partial product. We will also say that $(\Li,\Delta, S)$ is a \emph{locality} when $(\Li,S)$ is a locality via $\Delta$ according to \cite[Definition 2.9]{Ch}.

Let $(\Li,\Delta,S)$ be a locality. For $g\in\Li$ we write
\[S_g=\{s\in S\mid (g^{-1},s,g)\in\Db(\Li) \text{ and } s^g\in S\}.\]
More generally, if $w=(g_1,g_2,\dots,g_r)$ is a word of elements of $\Li$ we write $S_w$ for the set of elements $s$ of $S$ such $(g_1^{-1},s,g_1)\in\Db(\Li)$ and $s^{g_1}\in S$, $(g_2^{-1},s^{g_1},g_2)\in\Db(\Li)$ and $(s^{g_1})^{g_2}\in S$, etc... 

Notice that if $w\in \Db(\Li)$ then $S_w\leq S_{\Pi(w)}$. 
We also recall that $w\in\mathbb{D}(\Li)$ if and only if $S_w\in\Delta$. In particular, $S_g\in\Delta$ for all $g\in\Li$.

\begin{lem}[{\cite[ Lemma 2.7]{Ch}}]\label{normalizer of P in Delta}
Let $(\Li,\Delta,S)$ be a locality. 
\begin{enumerate}[(a)]
\item For every $P\in \Delta$, $N_\Li(P)$ is a subgroup of $\Li$.
\item Let $g\in \Li$ and let $P\in\Delta$ with $P^g\in\Delta$. Then, for all $h\in N_\Li(P)$, $(g^{-1},h,g)\in\Db(\Li)$. Moreover, 
          \[\xymatrix{c_g:N_\Li(P)\ar[r] & N_\Li(P^g)}\]
      is an isomorphism of groups.   
\end{enumerate} 
\end{lem}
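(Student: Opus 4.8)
The plan is to lean on the description of the product domain recalled above — a word $w$ lies in $\Db(\Li)$ precisely when $S_w\in\Delta$ — together with the closure of $\Delta$ under passing to overgroups in $S$, and the partial group axioms that govern how conjugation interacts with $\Pi$. Throughout I use that $N_\Li(P)=\{g\in\Li\mid P\leq S_g\text{ and }P^g=P\}$.

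For part (a), the key observation is that every product of elements of $N_\Li(P)$ is defined. If $g_1,\dots,g_r\in N_\Li(P)$ and $w=(g_1,\dots,g_r)$, then unwinding the definition of $S_w$ and using $P\leq S_{g_i}$ and $P^{g_i}=P$ at each stage shows $P\leq S_w$; since $P\in\Delta$ and $\Delta$ is overgroup-closed, $S_w\in\Delta$, whence $w\in\Db(\Li)$. Applying this to $w=(g,h)$ and combining with $S_{(g,h)}\leq S_{\Pi(g,h)}$ gives $P\leq S_{gh}$, while the partial group axioms give $P^{gh}=(P^g)^h=P$, so $gh\in N_\Li(P)$. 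The identity of $\Li$ evidently lies in $N_\Li(P)$, and if $g\in N_\Li(P)$ then, since conjugation by $g^{-1}$ undoes conjugation by $g$ on subgroups of $S_g$, the element $g^{-1}$ conjugates $P=P^g$ back to $P$; hence $P\leq S_{g^{-1}}$, $P^{g^{-1}}=P$, and $g^{-1}\in N_\Li(P)$. Associativity of the induced operation on $N_\Li(P)$ is a special case of the partial group axioms, all triple products being defined; thus $N_\Li(P)$ is a subgroup of $\Li$.

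For part (b), fix $g\in\Li$ with $P,P^g\in\Delta$ and $h\in N_\Li(P)$. I would first verify $(g^{-1},h,g)\in\Db(\Li)$: since $P\leq S_g$ we have $P^g\leq S_{g^{-1}}$ with $(P^g)^{g^{-1}}=P$, so for $s\in P^g$ one gets $s^{g^{-1}}\in P\leq S_h$, then $(s^{g^{-1}})^h\in P^h=P\leq S_g$, then $((s^{g^{-1}})^h)^g\in P^g\leq S$; therefore $P^g\leq S_{(g^{-1},h,g)}$, this set lies in $\Delta$, and $(g^{-1},h,g)\in\Db(\Li)$, so $h^g:=\Pi(g^{-1},h,g)$ is defined. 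From $S_{(g^{-1},h,g)}\leq S_{h^g}$ one gets $P^g\leq S_{h^g}$, and composing conjugations through the product yields $(P^g)^{h^g}=\bigl(((P^g)^{g^{-1}})^h\bigr)^g=(P^h)^g=P^g$; hence $h^g\in N_\Li(P^g)$, so $c_g$ carries $N_\Li(P)$ into $N_\Li(P^g)$. That $c_g$ is a homomorphism follows from the standard manipulation $\Pi(g^{-1},h_1,g)\,\Pi(g^{-1},h_2,g)=\Pi(g^{-1},h_1,g,g^{-1},h_2,g)=\Pi(g^{-1},h_1h_2,g)$, all the words involved lying in $\Db(\Li)$ by computations as above. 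Finally $c_{g^{-1}}\colon N_\Li(P^g)\to N_\Li(P)$ is a two-sided inverse of $c_g$, since conjugation by $g$ followed by $g^{-1}$ — and conversely — is conjugation by the identity on the subgroups in play. Hence $c_g$ is an isomorphism of groups.

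The step I expect to be the main obstacle is the bookkeeping of domains: showing $(g^{-1},h,g)\in\Db(\Li)$, and that the padded and contracted words used for the homomorphism property also lie in $\Db(\Li)$. Each of these rests on correctly propagating a containment of the form $P^{(\cdot)}\leq S_w$ through the recursive definition of $S_w$, and on the partial group consistency axioms relating $\Pi$ to conjugation. Once those domain memberships are secured, the algebraic identities are immediate.
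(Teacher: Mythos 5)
Your argument is correct: the paper itself does not prove this lemma (it is quoted from Chermak's Lemma 2.7), and your proof is essentially the standard argument given there --- verifying that the relevant words lie in $\Db(\Li)$ by showing that $P$ (respectively $P^g$) is contained in $S_w$, using overgroup-closure of $\Delta$ together with the recalled criterion $w\in\Db(\Li)\Leftrightarrow S_w\in\Delta$, and then deriving the algebraic identities from the partial-group axioms. Nothing further is needed.
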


A fusion system over a $p$-group $S$ is a way to abstract the action of a finite group $G\geq S$ on the subgroups of $S$ by conjugation. We refer the reader to \cite[Chapter I]{AKO} for the definitions, the basic properties and the usual notations about fusion systems. An important example is given by the fusion system of a locality.

\begin{ex}
Let $(\Li,\Delta,S)$ be a locality. The \emph{fusion system of $\Li$ over $S$} is the fusion system $\Fc_S(\Li)$ generated by $\{c_g:S_g\rightarrow (S_g)^g\mid g\in\Li\}$: for $P,Q\leq S$ and $\varphi\in \Hom(P,Q)$, $\varphi\in\Hom_{\Fc_S(\Li)}(P,Q)$ if $\varphi$ is a composite of restrictions of $c_g:S_g\rightarrow (S_g)^g$ for $g\in \Li$.
\end{ex}

We recall that, if $\Fc$ is a fusion system over a $p$-group $S$, we say that $P\leq S$ is \emph{fully normalized} in  $\Fc$ if $|N_S(P)|\geq|N_S(Q)|$ for all $Q\in P^\Fc$.

\begin{lem}[{\cite[Proposition 2.18(c)]{Ch}}]\label{normalizer of fully normalized}
Let $(\Li,\Delta,S)$ a locality and $P\in\Delta$. 
If $P$ is fully normalized in $\Fc_S(\Li)$, then $N_S(P)\in\Syl_p(N_\Li(P))$.
\end{lem}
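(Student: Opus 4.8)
The plan is to reduce the statement to a single well-chosen $\Li$-conjugate of $P$ and then transfer the conclusion back using Lemma~\ref{normalizer of P in Delta}. Write $N=N_\Li(P)$, a finite group by Lemma~\ref{normalizer of P in Delta}(a), and note that $N_S(P)=S\cap N$ is a $p$-subgroup of $N$; we must show it is a Sylow $p$-subgroup. I would proceed in two steps: (i) produce $g\in\Li$ with $P\le S_g$ and $N_S(P^g)\in\Syl_p(N_\Li(P^g))$; and (ii) deduce the statement for $P$ from the fact that $P$ is fully normalized in $\Fc_S(\Li)$.

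Step (ii) is short. Fixing $g$ as in (i), the restriction of $c_g\colon S_g\to (S_g)^g$ to $P$ is an isomorphism $P\to P^g$ lying in $\Fc_S(\Li)$, so $P^g\in P^{\Fc_S(\Li)}$ and, since $\Delta$ is closed under $\Fc_S(\Li)$-conjugacy, $P^g\in\Delta$. Full normalization then gives $|N_S(P)|\ge|N_S(P^g)|$. By Lemma~\ref{normalizer of P in Delta}(b), $c_g$ restricts to a group isomorphism $N\to N_\Li(P^g)$, so $|N|_p=|N_\Li(P^g)|_p=|N_S(P^g)|$, the last equality by (i). Since $N_S(P)\le N$ is a $p$-group, $|N_S(P)|\le|N|_p$, and the inequalities force $|N_S(P)|=|N|_p$, i.e. $N_S(P)\in\Syl_p(N)$.

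For step (i), choose $T\in\Syl_p(N)$ with $N_S(P)\le T$. Then $P\le N_S(P)\le T\le N$, so $P$ is a normal subgroup of $T$ and $T$ is a $p$-subgroup of $\Li$ containing the member $P$ of $\Delta$ as a normal subgroup. The crucial point is the following Sylow-type statement: there is $g\in\Li$ such that $(g^{-1},t,g)\in\Db(\Li)$ and $t^g\in S$ for every $t\in T$. Granting this, $P\le S_g$ (as $P\le T$), the image $T^g=\{\,t^g\mid t\in T\,\}$ is a subgroup of $S$, it contains $P^g$, and $c_g|_P$ being an isomorphism in $\Fc_S(\Li)$ gives $P^g\in P^{\Fc_S(\Li)}\subseteq\Delta$; moreover $T^g$ normalizes $P^g$ (apply $c_g$), so $T^g\le N_S(P^g)$. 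Therefore $|N_S(P^g)|\ge|T^g|=|T|=|N|_p=|N_\Li(P^g)|_p$ by Lemma~\ref{normalizer of P in Delta}(b), and as $N_S(P^g)$ is a $p$-subgroup of $N_\Li(P^g)$ we conclude $N_S(P^g)\in\Syl_p(N_\Li(P^g))$, which is (i).

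The main obstacle is this last Sylow-type statement, which is internal to the locality. I would establish it by a maximality argument: among pairs $(g,V)$ with $P\le V\le T$, $(g^{-1},v,g)\in\Db(\Li)$ and $v^g\in S$ for all $v\in V$ (the pair $(1,P)$ qualifies), pick one with $|V|$ maximal; then $V^g\le S$ is a subgroup containing $P^g\in\Delta$, so $V^g\in\Delta$, and if $V\ne T$ then $V$ is properly contained in $N_T(V)$, and I would use an element of $N_T(V)\setminus V$ together with Lemma~\ref{normalizer of P in Delta} applied to the group $N_\Li(V^g)$ and the maximality of $S$ among $p$-subgroups of $\Li$ to enlarge $V$, contradicting maximality; hence $V=T$. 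The delicate part is keeping track of which partial products are defined while $V$ itself need not lie in $S$; this is essentially the content of \cite[Proposition~2.18]{Ch}, and alternatively one can simply quote the standard fact that any $p$-subgroup of a locality containing a normal member of $\Delta$ is $\Li$-conjugate into $S$.
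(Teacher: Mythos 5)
First, note that the paper does not actually prove this lemma: it is quoted from Chermak as \cite[Proposition 2.18(c)]{Ch}, so any self-contained argument has to reproduce the content of that proposition. Your reduction is fine as far as it goes: step (ii) is correct (using that $\Delta$ is closed under $\Fc_S(\Li)$-conjugation, that $c_g\colon N_\Li(P)\to N_\Li(P^g)$ is an isomorphism by Lemma \ref{normalizer of P in Delta}(b), and the defining inequality of ``fully normalized''), and step (i) does follow from your ``Sylow-type statement'' that a $p$-subgroup $T$ of $\Li$ containing $P\in\Delta$ as a normal subgroup can be conjugated into $S$ by a single element. Indeed, since $T\le N_\Li(P)$, Lemma \ref{normalizer of P in Delta}(b) already gives that $(g^{-1},t,g)\in\Db(\Li)$ for all $t\in T$ once $P\le S_g$, so even your bookkeeping worries there are not the issue.

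The genuine gap is that the Sylow-type statement is exactly where all the difficulty of the lemma lives, and your sketch of it does not close. In the maximality argument, after conjugating you have $W=V^g\le S$, $W\in\Delta$, and a $p$-element $x^g\in N_\Li(W)\smallsetminus W$; to enlarge the pair $(g,V)$ you must conjugate the strictly larger $p$-group $W\langle x^g\rangle$ back into $S$. Maximality of $S$ in the poset of $p$-subgroups of $\Li$ gives nothing here (maximal does not mean every $p$-subgroup is conjugate into $S$), and Sylow's theorem inside the finite group $N_\Li(W)$ only helps if you already know $N_S(W)\in\Syl_p(N_\Li(W))$ --- which is an instance of the very lemma you are proving, for the subgroup $W$, which at the first step has the same order as $P$ and need not be fully normalized. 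So the argument as written is circular, and your two escape routes (``this is essentially the content of \cite[Proposition 2.18]{Ch}'' or ``quote the standard fact'') amount to citing the statement being proved, i.e. exactly what the paper already does. A correct self-contained proof requires the inductive bootstrap that Chermak actually carries out in Proposition 2.18 (proving the conjugation statements and the Sylow property simultaneously, starting from $P=S$, where maximality of $S$ does apply, and working downward), and that induction is the missing idea in your proposal.
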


\section{Alperin's Fusion Theorem}

\begin{defi}
let $G$ be a finite group.
A subgroup $H$ of $G$ is \emph{strongly $p$-embedded} if $H<G$ and for every $g\in G\smallsetminus H$, $p$ does not divide $|H\cap H^g|$.
\end{defi}

\begin{lem}[{\cite[Proposition A.7.(b)]{AKO}}]\label{strongly pembedded lemma}
Let $G$ be a finite group and $S\in\Syl_p(G)$.
If $G$ does not contains a strongly $p$-embedded subgroup, then 
\[G=\langle x\in G \mid S\cap S^x\neq 1\rangle.\]
\end{lem}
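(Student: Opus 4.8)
The plan is to put $H:=\langle x\in G\mid S\cap S^x\neq 1\rangle$ and to show that $H$ must equal $G$ by proving that otherwise $H$ would be strongly $p$-embedded, contrary to hypothesis. If $S=1$ the hypothesis already forces $G=1$ (any proper subgroup would vacuously be strongly $p$-embedded), and there is nothing to prove, so I would assume $S\neq 1$ from the outset. Taking $x=1$ gives $S\leq H$, and since $S\in\Syl_p(G)$ and $S\leq H\leq G$ we get $S\in\Syl_p(H)$; taking $x\in N_G(S)$ also shows $N_G(S)\leq H$, though only $S\in\Syl_p(H)$ will be used below.

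Now suppose $H<G$ and fix $g\in G\smallsetminus H$; I would check that $p\nmid |H\cap H^g|$. If not, $H\cap H^g$ contains a nontrivial $p$-subgroup $P$. Since $P\leq H$ and $S\in\Syl_p(H)$, there is $a\in H$ with $P^a\leq S$; since $P\leq H^g$, equivalently $P^{g^{-1}}\leq H$, there is likewise $h\in H$ with $P^{g^{-1}h}\leq S$. Setting $b:=g^{-1}h$, we have $P^b\leq S$, and then $(P^a)^{a^{-1}b}=P^b\leq S$, so $1\neq P^a\leq S\cap S^{b^{-1}a}$. By the definition of $H$ this gives $b^{-1}a\in H$, and since $a\in H$ we get $b\in H$, hence $g^{-1}=bh^{-1}\in H$, contradicting $g\notin H$. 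Thus no such $g$ exists, $H$ is strongly $p$-embedded in $G$, contradicting the hypothesis; consequently $H=G$.

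I do not expect a serious obstacle here: this is the familiar argument that Sylow conjugacy inside $H$ "bridges" any prime-order piece of $H\cap H^g$ back into the defining generating set. The only points requiring care are keeping the conjugation convention $x^g=g^{-1}xg$ consistent through the bridging step (so that $P^a\leq S$ together with $(P^a)^{a^{-1}b}\leq S$ really yields $P^a\leq S\cap S^{b^{-1}a}$) and dispatching the degenerate case $S=1$; everything else is routine.
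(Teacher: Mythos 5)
Your proof is correct: the Sylow-bridging argument showing that $H=\langle x\in G\mid S\cap S^x\neq 1\rangle$ would otherwise be strongly $p$-embedded is exactly the standard proof of this fact, and your handling of the conjugation conventions and of the degenerate case $S=1$ (which works because the paper's definition of strongly $p$-embedded does not require $p\mid |H|$) is sound. The paper itself gives no proof, citing \cite[Proposition A.7(b)]{AKO}, and your argument is essentially the one found in that reference, so there is nothing further to compare.
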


\begin{defi}
Let $(\Li,\Delta,S)$ be a locality.
A subgroup $P\in\Delta$ is \emph{$\Li$-essential} if 
\begin{enumerate}[(i)]
\item $N_S(P)\in\Syl_p(N_\Li(P))$ (or, equivalently $P$ is fully normalized in $\Fc_S(\Li)$), and
\item $N_\Li(P)/P$ contains a strongly $p$-embedded subgroup.
\end{enumerate}
Denote by $\Delta^e \subseteq \Delta$ the subcollection of $\Li$-essential subgroups of $S$.
\end{defi}
Notice also that for a subgroup $P\leq S$ to be  $\Li$-essential, we do not require $P$ to be centric in $\Li$ as it is required in \cite{Ch} Definition 2.4.

\begin{thm}\label{AFT}
Let $(\Li,\Delta,S)$ be a locality.
Then, for every $g\in \Li$, there exists $Q_1,Q_2,\dots,Q_n\in\Delta^e\cup\{S\}$ and $w=(g_1,g_2,\dots,g_n)\in\Db(\Li)$ such that,
\begin{enumerate}[(a)]
\item for every $i\in\{1,2,\dots,n\}$, $g_i\in N_\Li(Q_i)$ and $S_{g_i}=Q_i$ ; and\label{AFTi}
\item $S_w=S_g$ and $g=\Pi(w)$.\label{AFTii}
\end{enumerate}
\end{thm}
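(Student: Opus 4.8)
The plan is to argue by induction on the index $[S:S_g]$. If $[S:S_g]=1$, i.e. $S_g=S$, then $s^g\in S$ for all $s\in S$, hence $S^g=S$ and $g\in N_\Li(S)$; the one-letter word $w=(g)$ with $Q_1=S$ then satisfies (a) and (b). So suppose $P:=S_g\lneq S$ and that the conclusion holds for every $g'\in\Li$ with $|S_{g'}|>|P|$.

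\emph{Straightening.} By the axioms of a locality the $\Fc_S(\Li)$-conjugacy class of $P$ lies in $\Delta$, and any isomorphism of $\Fc_S(\Li)$ between members of $\Delta$ is the restriction of $c_a$ for a single $a\in\Li$ with the source contained in $S_a$: a composite $c_{g_1}\cdots c_{g_k}$ of such restrictions has its source lying in $S_{(g_1,\dots,g_k)}$, so this word is in $\Db(\Li)$ and $a:=\Pi(g_1,\dots,g_k)$ works. Choose $R$ in the $\Fc_S(\Li)$-class of $P$ that is fully normalized, so $N_S(R)\in\Syl_p(N_\Li(R))$ by Lemma~\ref{normalizer of fully normalized}, and choose $a_0\in\Li$ with $P\le S_{a_0}$ and $P^{a_0}=R$. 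By Lemma~\ref{normalizer of P in Delta}(b), $c_{a_0}$ is an isomorphism $N_\Li(P)\to N_\Li(R)$, so it carries the $p$-group $N_S(P)$ onto a $p$-subgroup of $N_\Li(R)$; by the Sylow property there is $c\in N_\Li(R)$ conjugating this image into $N_S(R)$, and $a:=a_0c$ then satisfies $P^a=R$ and $N_S(P)\le S_a$. As $P\lneq S$ forces $P\lneq N_S(P)\le S_a$, we have $|S_a|>|P|$ and the induction hypothesis applies to $a$. Applying the same construction to $P^g=c_g(P)$ gives $b\in\Li$ with $(P^g)^b=R$, $N_S(P^g)\le S_b$, $|S_b|>|P|$; checking that $R$ (resp.\ $P$) lies in the set $S_{w}$ for the obvious three-letter words $w$ then shows $h:=a^{-1}gb\in N_\Li(R)$ and $g=ahb^{-1}$.

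\emph{Decomposing $h$ and assembling.} If $S_h\gneq R$, the induction hypothesis applies to $h$. If $S_h=R$ and $N_\Li(R)/R$ has a strongly $p$-embedded subgroup, then $R\in\Delta^e$ (it is fully normalized and the quotient condition holds), so $h$ is a single legal letter. If $S_h=R$ and $N_\Li(R)/R$ has no strongly $p$-embedded subgroup, Lemma~\ref{strongly pembedded lemma} applied to $N_\Li(R)/R$ with Sylow $N_S(R)/R$ writes the image of $h$ as $\bar x_1\cdots\bar x_m$ with $(N_S(R)/R)\cap(N_S(R)/R)^{\bar x_i}\neq 1$; each lift $x_i\in N_\Li(R)$ then conjugates some subgroup strictly between $R$ and $N_S(R)$ into $S$, so $|S_{x_i}|>|R|=|P|$ and the induction hypothesis applies, while $h=(x_1\cdots x_m)r$ with $r\in R$, and $r$ is a legal letter with associated subgroup $S$ (as $r\in S$, whence $r\in N_\Li(S)$ and $S_r=S$). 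In every case one obtains a word $w_h\in\Db(\Li)$ with $\Pi(w_h)=h$, $S_{w_h}=S_h\supseteq R$ and all letters of the required form. Now take words $w_a,w_b$ from the induction hypothesis applied to $a,b$, let $w_{b^{-1}}$ be $w_b$ reversed with each letter inverted (still legal, since $g_i\in N_\Li(Q_i)$ with $S_{g_i}=Q_i$ forces $g_i^{-1}\in N_\Li(Q_i)$ with $S_{g_i^{-1}}=Q_i$), and set $w:=w_a\circ w_h\circ w_{b^{-1}}$. Using the locality axiom $v\in\Db(\Li)\Leftrightarrow S_v\in\Delta$ together with the partial-group axioms, one checks $P\le S_{w_a}$ with $P$ mapped to $R$, then $R\le S_{w_h}$ with $R$ mapped to $R$, then $R\le S_{w_{b^{-1}}}$ with $R$ mapped to $P^g$, so $P\le S_w$; hence $w\in\Db(\Li)$, $\Pi(w)=ahb^{-1}=g$ and $S_w=S_g$ (the inclusion $S_w\le S_{\Pi(w)}=S_g$ being automatic). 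This proves Theorem~\ref{AFT}.

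The step I expect to be the main obstacle is the straightening: producing $a$ and $b$ that conjugate $P$ and $P^g$ onto the fully normalized representative $R$ \emph{and} carry $N_S(P)$, $N_S(P^g)$ into $S$. This is exactly what forces the index to drop strictly so that the induction can be entered, and it is where Lemma~\ref{normalizer of fully normalized} (Sylow-ness of $N_S(R)$ in $N_\Li(R)$) and the rigidity of localities (closure of $\Delta$ under overgroups, so that a subgroup in $\Delta$ lying in some $S_w$ automatically makes $w$ defined) are used essentially. A secondary difficulty, tedious rather than deep, is the partial-group bookkeeping needed to certify that each assembled word lies in $\Db(\Li)$ and that $S_w=S_g$.
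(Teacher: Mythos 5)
Your argument is correct and is essentially the paper's own proof: induction on $|S_g|$ (the paper phrases it as a maximal counterexample), straightening $P=S_g$ onto a fully normalized representative by elements realizing the fusion with strictly larger $S_\bullet$ via Lemma~\ref{normalizer of fully normalized} and Sylow's theorem in $N_\Li(R)$, and then either $R\in\Delta^e$ or Lemma~\ref{strongly pembedded lemma} applied to $N_\Li(R)/R$ to split the normalizer element into pieces with strictly larger $S_\bullet$. The only differences are presentational (e.g.\ your explicit extra letter $r\in R$ with $Q=S$, and the explicit word bookkeeping), not mathematical.
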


\begin{proof}
We will say that $g\in\Li$ admits an \emph{essential decomposition} if there exists $Q_1,Q_2,\dots,Q_n\in\Delta^e\cup\{S\}$ and $w=(g_1,g_2,\dots,g_n)\in\Db(\Li)$ such that \eqref{AFTi} and \eqref{AFTii} are satisfied. Notice that we have the followings.
\begin{enumerate}[(1)]
 \item If $g\in \Li$ admits an essential decomposition then $g^{-1}$ admits an essential decomposition.\label{I}
 \item If $(g_1,g_2,\dots,g_n)\in\Db(\Li)$ with $S_{(g_1,g_2,\dots,g_n)}=S_{g_1g_2\cdots g_n}$ and each $g_i$ admits an essential decomposition, then $g_1g_2\cdots g_n$ admits an essential decomposition.\label{P}
\end{enumerate}

Assume Theorem \ref{AFT} is false and, among all $g\in\Li$ which does not admit an essential decomposition, choose $g$ with $|S_g|$ as large as possible. Set $P=S_g$, $P'=P^g$ and $\Fc=\Fc_S(\Li)$.

If $P=S$, then $g\in N_\Li(S)$ and $g$ admits an essential decomposition with $n=1$, $Q_1=S$ and $w=(g)$.
Thus, we can assume that $P<S$.
Since $P=S_g\in\Delta$, Lemma \ref{normalizer of P in Delta} implies that $N_\Li(P)$ is a subgroup of $\Li$. 
\begin{minipage}[c]{0.68\linewidth}
Choose $Q\in \Delta$ fully normalized and $h,h'\in \Li$ such that $P^h=Q$ and $(P')^{h'}=Q$.
By Lemma \ref{normalizer of fully normalized}, $N_S(Q)\in \Syl_p(N_\Li(Q))$.  
Hence, by Lemma \ref{normalizer of P in Delta} and Sylow's Theorem (applied in $N_\Li(Q)$), we can choose $h$ and $h'$ such that $N_S(P)^h\leq N_S(Q)$ and $N_S(P')^{h'}\leq N_S(Q)$. Then $P<N_S(P)\leq S_h$ and $P'<N_S(P')\leq S_{h'}$ and, by maximality of $|S_g|=|P|=|P'|$, $h$ and $h'$ admit an essential decomposition.  
The word $w=(h^{-1},g,h')$ is in $\Db(\Li)$ via $Q$ and $g':=h^{-1}gh'\in N_\Li(Q)$. Thus $g=hg'h'^{-1}$.
\end{minipage}
\hfill
\begin{minipage}[c]{0.28\linewidth}
\vspace{-0.5cm}
\[
\xymatrix{ & Q \ar@(ul,ur)[]^{c_{g'}} &\\
          P \ar[ur]^{c_h}\ar[rr]_{c_g} & & P'\ar[lu]_{c_{h'}}
          }
\]
\end{minipage}
Since $g'\in N_\Li(Q)$ and $g=hg'h'^{-1}$,  $P\leq S_{(h,g',h'^{-1})}\leq S_g = P$, i.e. $S_{(h,g',h'^{-1})}= S_g$. Therefore, if $g'$ admits an essential decomposition, then, by \eqref{I} and \eqref{P}, $g$ admits an essential decomposition. Now, if $Q< S_{g'}$, then the maximality of $|S_g|=|P|=|Q|$ implies that $g'$ admits an essential decomposition. Thus $Q=S_g'$ and, up to replace $g$ by $g'$, we can assume that $P$ is fully normalized and $g\in N_\Li(P)$.

If $P\in\Delta^e$, then $g$ admits an essential decomposition with $n=1$, $Q_1=P$ and $w=(g)$.
Thus, $P\in\Delta\smallsetminus \Delta^e$ and $N_\Li(P)/P$ does not contain a strongly $p$-embedded subgroup. Since $P$ is fully normalized, $N_S(P)\in\Syl_p(N_\Li(P))$ and, by Lemma \ref{strongly pembedded lemma}, we can write $g$ as a product $g=g_1g_2\cdots g_n$ with, for $i\in\{1,2,\dots,n\}$, $P<N_S(P)\cap N_S(P)^{g_i}$. We have $P\leq S_{(g_1,g_2,\dots,g_n)}\leq S_g=P$ and so $S_{(g_1,g_2,\dots,g_n)}=S_g$. Now, as $P<N_S(P)\cap N_S(P)^{g_i}\leq S_{g_i}$, the maximality of $|S_g|=|P|$ yields that each $g_i$ admits an essential decomposition. Therefore, by \eqref{P}, $g$ admits an essential decomposition which contradicts the initial assumption.
\end{proof}

As mention before, for a subgroup $P\leq S$ to be $\Li$-essential, we do not require $P$ to be centric. Indeed, Theorem \ref{AFT} does not work if we add this requirement.

\begin{ex}
Let $\Li= \Sigma_3$ be the symmetric group over 3 letters and $p=2$. let $S=<(1,2)>$ and $\Delta$ be the collection of all the subgroups of $S$ (i.e. $\Delta=\{S,\{e\}\}$). Then $(\Li,\Delta,S)$ is a locality where $S$ and $\{e\}$ are $\Li$-essential. But only $S$ is centric and $(1,2,3)$ is not the product of elements in $N_\Li(S)=S$.
\end{ex}

\section{Application to transporter systems}

\subsection{Transporter systems and localities}

We refer the reader to \cite[Definition 3.1]{OV1} for the definition of transporter system.
The typical example is the following.
\begin{ex}
Let $G$ be a finite group $S\in\Syl_p(G)$ and $\Delta$ an $\Fc$-invariant collection of subgroups of $S$.
The \emph{transporter category} of $G$ over $S$ with set of object $\Delta$ is the small category $\T=\T_S^\Delta(G)$ with set of objects $\Delta$ and, for $P,Q\in\Delta$, 
\[\Mor_\T(P,Q)=\{g\in G\mid P^g\leq Q\}.\]
By \cite[Proposition 3.12]{OV1}, it is a transporter system associated to $\Fc_S(G)$. 
\end{ex}

In \cite[Appendix X]{Ch}, Chermak gives a one-to-one correspondence between localities with fusion system $\Fc$ and transporter systems associated to $\Fc$.
One direction of this correspondence is given by the following construction.

\begin{defi}
Let $(\Li,\Delta,S)$ be a locality.
We define the \emph{transporter system} of $(\Li,\Delta,S)$ as the category $\T_{\Delta}(\Li)$ with set of object $\Delta$ and with, for $P,Q\in \Delta$,
\[\Mor_\T(P,Q)= \{g\in \Li\mid P\leq S_g \text{ and } P^g\leq Q\}.\]
By \cite[Lemma X.1]{Ch}, this define a transporter system associated to $\Fc_S(\Li)$. 
\end{defi}

\begin{prop}[{\cite[Proposition X.9]{Ch}}]\label{p:realisable}
Let $\Fc$ be a saturated fusion system over a $p$-group $S$, and $\T$ be an associated transporter system.
Then there exists a locality $(\Li,\Delta,S)$ such that $\T=\T_\Delta(\Li)$.
\end{prop}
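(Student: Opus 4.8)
The plan is to construct the locality $(\Li,\Delta,S)$ directly from the data of $\T$ and then to check that the construction $\Li\mapsto\T_\Delta(\Li)$ of the previous definition recovers $\T$; this is Chermak's argument in \cite[Appendix X]{Ch}, and I indicate the shape it should take.

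\emph{Building the partial group.} Using the basic properties of a transporter system (see \cite[Section 3]{OV1} and \cite[Appendix X]{Ch}: morphisms are epi and mono, restrictions along inclusions exist and are unique, and isomorphisms extend), each morphism $\varphi\in\Mor_\T(P,Q)$ determines, after composing with the inclusion $Q\hookrightarrow S$, a conjugation map of subgroups of $S$ together with a largest subgroup to which it lifts inside $\T$; because $\Delta$ is $\Fc$-closed and $\Fc$ is saturated, one shows that within $\Delta$ such a lift has a well-defined maximal source. Take $\Li$ to be the set of these ``maximal'' morphisms into $S$ whose source lies in $\Delta$ — equivalently, the set of classes of morphisms into $S$ for the equivalence relation generated by restriction, keeping the classes whose common source is in $\Delta$ (in the case $\T=\T_S^\Delta(G)$ this is exactly $\{g\in G\mid S_g\in\Delta\}$). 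Embed $S\hookrightarrow\Li$ by $s\mapsto\epsilon_{S,S}(s)$, injective by injectivity of $\epsilon$. Declare a word $w=(g_1,\dots,g_n)$ to lie in $\Db(\Li)$ precisely when the $g_i$ are represented by a composable chain of $\T$-morphisms $\varphi_i\colon P_{i-1}\to P_i$ with $P_0\in\Delta$ — equivalently, by the discussion of Section 1, when $S_w\in\Delta$ — and set $\Pi(w)$ to be the maximal representative of $\varphi_n\circ\dots\circ\varphi_1$; inverses come from the canonical factorization of a $\T$-morphism as an isomorphism followed by an inclusion. One then checks the partial-group axioms (\cite[Section 2]{Ch}): independence of $\Pi(w)$ from the chosen chain of representatives, restricted associativity, and the inverse axiom, all of which reduce to functoriality of composition in $\T$ together with uniqueness of restrictions.

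\emph{Verifying it is a locality realizing $\T$.} Next I would check \cite[Definition 2.9]{Ch}: $\Delta$ is closed under overgroups and under $\Fc_S(\Li)$-conjugacy, and the locality axiom $w\in\Db(\Li)\Leftrightarrow S_w\in\Delta$ — the first two are built into the construction, and maximality of $S$ as a $p$-subgroup of $\Li$ follows from the Sylow axiom for $\T$ (for $P\in\Delta$ fully normalized, $\epsilon_P(N_S(P))\in\Syl_p(\Aut_\T(P))$). One identifies $\Fc_S(\Li)$ with $\Fc$ by observing that the generators $c_g\colon S_g\to (S_g)^g$ of $\Fc_S(\Li)$ are exactly the conjugation maps underlying morphisms of $\T$, and that $\rho$ is surjective onto each $\Hom_\Fc(P,Q)$, so the two fusion systems have the same generators. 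Finally, for $P,Q\in\Delta$ one sends $\varphi\in\Mor_\T(P,Q)$ to the element of $\Li$ obtained as the maximal $\T$-lift of $(Q\hookrightarrow S)\circ\varphi$; this lands in $\{g\in\Li\mid P\le S_g,\ P^g\le Q\}=\Mor_{\T_\Delta(\Li)}(P,Q)$, it is injective because inclusions are monomorphisms, and it is surjective because every $\T$-morphism into $S$ is a restriction of one whose source lies in $\Delta$.

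\emph{The main obstacle.} I expect the technical heart to be precisely this last surjectivity statement, together with the well-definedness of the partial product on the domain $\Db(\Li)$ as prescribed: one must show that passing to ``maximal representatives'' does not lose any morphism of $\T$, and that a composite of two such representatives again extends to a source in $\Delta$. Both rest on the interplay between $\Fc$-closedness of $\Delta$, the extension axiom for $\T$, and saturation of $\Fc$; making the bookkeeping of restrictions and extensions cohere is the delicate part, and is the reason the statement is phrased for transporter systems associated to a \emph{saturated} fusion system.
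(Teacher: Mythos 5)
The paper offers no proof of this proposition at all: it is quoted directly from Chermak (\cite[Proposition X.9]{Ch}), so the only meaningful comparison is with the construction in Chermak's Appendix X. In shape, your outline does follow that construction: build $\Li$ from morphisms of $\T$ into $S$ modulo the relation generated by restriction (equivalently, from maximal representatives), declare a word to be in $\Db(\Li)$ when it is represented by a composable chain with source in $\Delta$, define $\Pi$ via composition in $\T$, and then identify $\Fc_S(\Li)$ with $\Fc$ and $\T_\Delta(\Li)$ with $\T$.

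As a proof, however, what you have written is a program rather than an argument: every step carrying real content is deferred. Concretely, none of the following is established in your sketch, and these are exactly where the work lies in Chermak's appendix: (i) that each class of morphisms has a well-defined maximal representative with source in $\Ob(\T)$ (this uses that $\T$-morphisms are epimorphisms and monomorphisms, the extension axiom (II) of \cite{OV1}, and closure of $\Ob(\T)$ under overgroups); (ii) that $\Pi$ is independent of the chosen chain and the partial-group axioms (in particular restricted associativity and inverses) hold; (iii) that $(\Li,\Delta,S)$ satisfies the locality axioms, notably that $S$ is maximal among the $p$-subgroups of $\Li$, which is where the Sylow axiom (I) enters and which requires a genuine argument; (iv) that $\Fc_S(\Li)=\Fc$ and that $\varphi\mapsto$ (maximal lift of $\varphi$ followed by inclusion) is a bijection $\Mor_\T(P,Q)\to\Mor_{\T_\Delta(\Li)}(P,Q)$. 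Naming these as ``the technical heart'' does not close them. A smaller point: the weight you put on saturation of $\Fc$ is doubtful; the bookkeeping you describe rests on the transporter-system axioms themselves (axioms (C), (I), (II), epis/monos, overgroup-closure of the object set) rather than on saturation per se. So: as a faithful pointer to Chermak's proof your outline is fine, and is no less complete than the paper's own citation; but if the goal was to reprove Proposition~\ref{p:realisable}, the gaps you list are the proof, and they remain open in your write-up.
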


\subsection{Alperin's Fusion system for transporter systems}

\begin{defi}
Let $\Fc$ be a saturated fusion system and $\T$ be an associated transporter system.
A subgroup $P\in\Ob(\T)$ is \emph{$\T$-essential} if 
\begin{enumerate}[(i)]
\item $N_S(P)\in\Syl_p(\Aut_\T(P))$, and
\item $\Aut_\T(P)/P$ contains a strongly $p$-embedded subgroup.
\end{enumerate}
Denote by $\T^e$ the full subcategory of $\T$ with set of objects $S$ and all the $\T$-essential subgroups of $S$.
\end{defi}

Let $(\Li,\Delta,S)$ be a locality. By definition, a subgroup $P\leq S$ is $\T_\Delta(\Li)$-essential if and only if $P$ is $\Li$-essential.

The following Theorem gives a generalization of the Alperin Fusion Theorem for transporter systems \cite[Proposition 3.10]{OV1}.

\begin{thm}\label{AFT transp}
Let $\Fc$ be a fusion system over a $p$-group $S$.
If $\T$ is a transporter system associated to $\Fc$, then every morphism in $\T$ is a composite of restrictions of automorphisms of $S$ or $\T$-essential subgroups.
\end{thm}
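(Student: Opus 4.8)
The plan is to deduce Theorem~\ref{AFT transp} from Theorem~\ref{AFT} via the correspondence between transporter systems and localities. First I would invoke Proposition~\ref{p:realisable}: since $\Fc$ is saturated (or at least, realizing $\T$ as $\T_\Delta(\Li)$ for some locality $(\Li,\Delta,S)$; one should check whether saturation is needed here or whether $\T_\Delta(\Li)=\T$ suffices under the stated hypotheses), we may assume $\T = \T_\Delta(\Li)$ for a locality $(\Li,\Delta,S)$ with $\Fc_S(\Li)=\Fc$. By the remark just before the theorem, the $\T$-essential subgroups are precisely the $\Li$-essential subgroups, so $\Ob(\T^e) = \Delta^e\cup\{S\}$ as collections of subgroups.

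Next I would take an arbitrary morphism $\varphi\in\Mor_\T(P,Q)$. By definition of $\T_\Delta(\Li)$, $\varphi$ is an element $g\in\Li$ with $P\leq S_g$ and $P^g\leq Q$. Apply Theorem~\ref{AFT} to $g$: there exist $Q_1,\dots,Q_n\in\Delta^e\cup\{S\}$ and $w=(g_1,\dots,g_n)\in\Db(\Li)$ with $g_i\in N_\Li(Q_i)$, $S_{g_i}=Q_i$, $S_w=S_g$ and $g=\Pi(w)$. Each $g_i$, viewed as a morphism, lies in $\Aut_\T(Q_i)$ since $Q_i^{g_i}=Q_i$ (as $g_i\in N_\Li(Q_i)$) and $Q_i=S_{g_i}$, and $Q_i$ is either $S$ or $\T$-essential. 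So it remains to see that $\varphi$ is the composite, in $\T$, of the appropriate restrictions of the $g_i$.

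The key step is to translate the locality statement ``$g=\Pi(w)$ with $S_w=S_g$'' into a factorization of morphisms in $\T$. Since $S_w=S_g\supseteq P$, the word $w$ is ``defined on $P$'': tracking $P$ through the partial product, set $P_0=P$ and $P_i=P_{i-1}^{g_i}$, so that $P=P_0\leq S_w\leq S_{g_i}$-level constraints guarantee each $P_{i-1}\leq S_{g_i}=Q_i$ and $P_i\leq Q_i$ (using $Q_i^{g_i}=Q_i$). Hence each $g_i$ restricts to a morphism $P_{i-1}\to P_i$ in $\T$, namely a restriction of the automorphism $g_i\in\Aut_\T(Q_i)$, and $P_n = P^g\leq Q$. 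Then the axioms of a transporter system (composition is induced by the partial product, together with the fact that restrictions compose correctly --- this is where \cite[Definition 3.1]{OV1} and \cite[Lemma X.1]{Ch} are used) give that the composite $P_0\to P_1\to\cdots\to P_n$ followed by the inclusion $P_n\hookrightarrow Q$ equals $\varphi$. I expect the main obstacle to be purely bookkeeping: verifying carefully that the divisibility/defined-ness conditions in the locality ($S_w=S_g$) really do certify that each intermediate restriction is a legitimate morphism of $\T$ and that their composite in $\T$ is $\varphi$ rather than some other lift of the same fusion map --- i.e. making the identification $\Mor_\T(P,Q)=\{g\in\Li : P\leq S_g,\ P^g\leq Q\}$ do its job on both the element level and the composition level. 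Once that is set up, no further computation is needed.
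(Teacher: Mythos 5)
Your proposal is correct and follows essentially the same route as the paper: realize $\T$ as $\T_\Delta(\Li)$ via Proposition~\ref{p:realisable}, apply Theorem~\ref{AFT} to the element $g\in\Li$ representing the morphism, and use $S_w=S_g\geq P$ to track $P$ through the word so that each $g_i$ restricts to a morphism $P^{g_1\cdots g_{i-1}}\to P^{g_1\cdots g_i}$ in $\T$ whose composite (up to the final inclusion into $Q$) is the given morphism. Your side remark about saturation is not a divergence either, since the paper's proof likewise just invokes Proposition~\ref{p:realisable}.
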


\begin{proof}
By Proposition \ref{p:realisable}, there is a locality $(\Li,\Delta,S)$ such that $\T=\T_\Delta(\Li)$.
Let $P,Q\in \Delta=\Ob(\T)$ and choose $g\in \Mor_\T(P,Q)\subseteq \Li$. Without loss of generality, we can assume that $P=S_g$ and $Q=S_g^g$.
By Theorem \ref{AFT}, there exists $Q_1,Q_2,\dots,Q_n\in\Delta^e\cup\{S\}$ and $w=(g_1,g_2,\dots,g_n)\in\Db(\Li)$ such that,
\begin{enumerate}[(i)]
\item for every $i\in\{1,2,\dots,n\}$, $g_i\in N_\Li(Q_i)$ and $S_{g_i}=Q_i$; and
\item $S_w=S_g$ and $g=\Pi(w)$.
\end{enumerate}
We have $P=S_g=S_w\leq S_{g_1}=Q_1$ and inductively, for $1\leq i\leq n-1$,  $P^{g_1\cdots g_i}\leq Q_{i+1}=S_{g_{i+1}}$. 
Thus $g$ is the composite of the restriction of $g_i\in \Aut_{\T}(Q_i)$ to $g_i\in \Mor_\T(P^{g_1\cdots g_{i-1}},P^{g_1\cdots g_i})$, for $1\leq i\leq n$.
\end{proof}

The following Corollary, which is a direct consequence of Theorem \ref{AFT transp}, may be helpful when computing limits over transporter systems. 
\begin{cor}\label{C1}
Let $\Fc$ be a fusion system over a $p$-group $S$.
Let $\T$ be a transporter system associated to $\Fc$ and let
\[\xymatrix{F:\T\ar[r] & \A}\]
be a functor into an abelian category $\A$
Then, 
\[\limproj{\T} F=\limproj{\T^e} F.\]
\end{cor}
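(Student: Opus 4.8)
The plan is to deduce Corollary \ref{C1} from Theorem \ref{AFT transp} by the standard ``higher limit via a subcategory'' argument, phrased here at the level of the ordinary inverse limit. First I would recall that $\limproj{\T} F$ is the subobject of $\prod_{P\in\Ob(\T)} F(P)$ consisting of compatible families $(x_P)$ — those satisfying $F(\varphi)(x_P)=x_Q$ for every $\varphi\in\Mor_\T(P,Q)$ — and likewise for $\limproj{\T^e} F$ with the product and the compatibility conditions ranging only over objects and morphisms of $\T^e$. Since $\T^e$ is a full subcategory of $\T$, restriction of compatible families gives a natural morphism $\rho\colon \limproj{\T} F\to\limproj{\T^e} F$, and the content of the corollary is that $\rho$ is an isomorphism.

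Next I would prove injectivity of $\rho$. Here the key point is that $S\in\Ob(\T^e)$ and every object $P$ of $\T$ receives the inclusion morphism from... more precisely, every $P\leq S$ admits the morphism $\incl_P^S\in\Mor_\T(P,S)$ (the image of $e\in\Li$, or the structural inclusion in the transporter system), so a compatible family $(x_P)_{P\in\Ob(\T)}$ is determined by $x_S$ via $x_P$ being forced only up to the morphisms out of $P$; this alone is not quite enough, so instead I would argue directly: if $(x_P)$ and $(y_P)$ are two compatible families on $\T$ agreeing on $\Ob(\T^e)$, then in particular they agree at $S$, and for arbitrary $P$ one uses that the structural inclusion $\iota_P\colon P\to S$ in a transporter system is a monomorphism (axiom of transporter systems: the functor from the transporter category is faithful, and restriction maps are monic), together with $F(\iota_P)(x_P)=x_S=y_S=F(\iota_P)(y_P)$ — wait, this needs $F(\iota_P)$ monic, which is not given. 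The cleaner route, which I would actually take, is to use Theorem \ref{AFT transp} for injectivity as well: it is cleanest to prove surjectivity and injectivity simultaneously by constructing the inverse.

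So the main step is surjectivity/construction of the inverse. Given a compatible family $(x_P)_{P\in\Ob(\T^e)}$ over $\T^e$, I would define $\tilde x_P$ for an arbitrary $P\in\Ob(\T)$ as follows: since $S\in\Ob(\T^e)$, we have $x_S$ in hand; one wants to ``restrict'' it down to $P$. This requires choosing, for each $P$, a morphism that expresses $x_P$ in terms of the essential data, and then checking independence of the choice using Theorem \ref{AFT transp}. Concretely: for $P\in\Ob(\T)$ pick any $\psi\in\Mor_\T(P,S)$ (e.g.\ the inclusion) and set $\tilde x_P := $ the unique element with $F(\psi)(\tilde x_P)=x_S$ — but uniqueness/existence again needs $F(\psi)$ to be an isomorphism, which fails in general. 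The correct formulation: define a family over all of $\T$ by extending along the essential generators. Since by Theorem \ref{AFT transp} every morphism in $\T$ factors as a composite of restrictions of morphisms in $\Aut_\T(S)\cup\bigcup_{P\in\T^e}\Aut_\T(P)$, and every object $P$ is a subgroup of $S$ hence sits inside an essential group or equals one via inclusions, one shows: the compatibility conditions of a family over $\T^e$, together with the inclusion morphisms $\Mor_\T(P,Q)\ni\incl$ for $P\le Q$ with $Q\in\Ob(\T^e)$ — note every $P\in\Ob(\T)$ satisfies $P\le S$ and $S\in\Ob(\T^e)$ — determine a unique consistent assignment. I would make this precise by the general lemma that for a full subcategory $\Cc_0\subseteq\Cc$ such that every morphism of $\Cc$ is a composite of restrictions of endomorphisms of objects of $\Cc_0$ and every object of $\Cc$ has a morphism to an object of $\Cc_0$ that is part of such a restriction system, the restriction functor on inverse limits is an isomorphism; then cite Theorem \ref{AFT transp} to verify the hypothesis.

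The hard part will be packaging the ``composite of restrictions'' statement of Theorem \ref{AFT transp} into the precise diagrammatic input needed for the limit comparison — specifically, verifying that the restriction morphisms appearing there are genuine morphisms in $\T$ between the relevant subgroups (which they are, by the proof of Theorem \ref{AFT transp}, where $g_i\in\Mor_\T(P^{g_1\cdots g_{i-1}},P^{g_1\cdots g_i})$ is exhibited explicitly), and that these together with inclusions generate enough of $\T$ to force well-definedness. Once that bookkeeping is in place, injectivity follows because a family over $\T^e$ agreeing with another forces agreement at $S$ and then propagates along the same restriction system, and surjectivity follows because the propagation is consistent precisely by the factorization theorem. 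I would therefore structure the proof as: (1) reduce to showing the restriction map $\rho$ is bijective; (2) state and use the general limit-comparison lemma; (3) verify its hypotheses from Theorem \ref{AFT transp}, noting $S\in\Ob(\T^e)$; (4) remark that since $\A$ is abelian and $\limproj{}$ there is computed as in $\mathbf{Ab}$ (or in $\A$ directly via the usual equalizer), the argument is purely formal. I expect step (3) to be where essentially all the real work sits, though it is short given the theorem already proved.
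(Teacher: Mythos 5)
The central content of the corollary --- that the restriction map $\rho\colon \limproj{\T}F\to\limproj{\T^e}F$ is invertible --- is never actually established in your proposal. Both of your concrete attempts (injectivity via $F(\iota_P)$ being monic, and defining $\tilde x_P$ as the unique preimage of $x_S$ under $F(\psi)$) are abandoned by you as unjustified, and the final plan rests on a ``general limit-comparison lemma'' that is neither stated precisely nor proved; since that lemma \emph{is} the corollary, nothing has been reduced to Theorem \ref{AFT transp}. The source of the trouble is the variance convention you fixed at the outset: you describe $\limproj{\T}F$ as the families with $F(\varphi)(x_P)=x_Q$ for $\varphi\in\Mor_\T(P,Q)$, i.e.\ you treat $F$ as covariant. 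Under that reading the statement is false: take $\Li=S=\mathbb{Z}/p$ and $\Delta=\{1,S\}$; no proper subgroup is $\T$-essential (its automorphism group in $\T$ is a $p$-group, which has no strongly $p$-embedded subgroup), so $\T^e$ is the full subcategory on $\{S\}$, and the covariant functor with $F(1)=0$, $F(S)=\mathbb{Z}$ and trivial $\Aut_\T(S)$-action has $\limproj{\T}F=0$ while $\limproj{\T^e}F=\mathbb{Z}$. Since morphisms in $\T$ only go from smaller to larger subgroups, covariant compatibility conditions propagate the wrong way, and no lemma of the kind you gesture at can hold in that convention. The corollary concerns contravariant functors (as in the application $H^*(-,M)$), for which compatibility reads $F(\varphi)(x_Q)=x_P$ for $\varphi\in\Mor_\T(P,Q)$.

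With the correct variance the argument you were searching for is short, which is presumably why the paper calls it a direct consequence. Injectivity of $\rho$ is immediate: every $P\in\Ob(\T)$ has the distinguished inclusion $\iota_P\in\Mor_\T(P,S)$ and $S\in\Ob(\T^e)$, so $x_P=F(\iota_P)(x_S)$ and a compatible family over $\T$ is determined by its restriction. For surjectivity, given a compatible family $(x_P)$ over $\T^e$, set $\tilde x_P:=F(\iota_P)(x_S)$ for all $P\in\Ob(\T)$; fullness of $\T^e$ gives $\tilde x_P=x_P$ for $P\in\Ob(\T^e)$. The class of morphisms $\varphi\in\Mor_\T(P,Q)$ with $F(\varphi)(\tilde x_Q)=\tilde x_P$ is closed under composition, so by Theorem \ref{AFT transp} it suffices to treat $\varphi$ a restriction of some $\alpha\in\Aut_\T(R)$ with $R\in\Ob(\T^e)$ (note $S\in\Ob(\T^e)$ and inclusions are restrictions of $\mathrm{id}_S$). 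From $\iota_Q^R\circ\varphi=\alpha\circ\iota_P^R$, functoriality, $\tilde x_Q=F(\iota_Q^R)(x_R)$, $\tilde x_P=F(\iota_P^R)(x_R)$ and $F(\alpha)(x_R)=x_R$ (compatibility over $\T^e$), one gets $F(\varphi)(\tilde x_Q)=F(\iota_P^R)F(\alpha)(x_R)=F(\iota_P^R)(x_R)=\tilde x_P$. This is exactly the bookkeeping your step (3) alludes to, but it cannot be carried out in the covariant setup you chose; as written, the proposal has a genuine gap.
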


We can for example use the previous corollary with $\Fc=\Fc_S(G)$, where $G$ is a finite group and $S\in\Syl_p(G)$, and $F=H^*(-,M)$ for $M$ a $\Zp[G]$-module.

\begin{cor}
Let $G$ be a finite group and $S\in \Syl_p(G)$.
Let $M$ a $\Zp[G]$-module.
Then 
\[H^*(G,M)\cong \limproj{\T_S^e(G)} H^*(-,M).\]
\end{cor}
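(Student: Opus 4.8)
The plan is to combine the transporter-system version of Alperin's fusion theorem (Corollary \ref{C1}) with the classical stable-elements description of group cohomology. First I would recall that, by Cartan--Eilenberg, for a finite group $G$ with $S\in\Syl_p(G)$ and a $\Zp[G]$-module $M$, the restriction map identifies $H^*(G,M)$ with the subring of \emph{stable elements} in $H^*(S,M)$; that is, $H^*(G,M)\cong\limproj{\Or_S(G)}H^*(-,M)$, where the limit is taken over the orbit category $\Or_S(G)$ of $\Fc=\Fc_S(G)$ (or equivalently the full subcategory on all subgroups of $S$, since inclusions impose no extra condition beyond those already forced by automorphisms). The functor $H^*(-,M)$ is a contravariant functor from the transporter category $\T=\T_S^\Delta(G)$ to the abelian category of $\Zp$-modules, factoring through $\Or_S(G)$ via $\Mor_\T(P,Q)\to\Mor_{\Or_S(G)}(P,Q)$ (an element $g$ acts on cohomology via $c_g$ composed with restriction, and inner automorphisms act trivially).

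The key step is to check that taking $\Delta$ to be the collection of \emph{all} subgroups of $S$ is harmless: $\T_S^\Delta(G)$ with $\Delta=\{P\le S\}$ is a transporter system associated to $\Fc_S(G)$, so Corollary \ref{C1} applies and gives $\limproj{\T}H^*(-,M)=\limproj{\T^e}H^*(-,M)$, where $\T^e$ is the full subcategory on $S$ and the $\T$-essential subgroups. I then need the identification $\limproj{\T}H^*(-,M)\cong H^*(G,M)$. One clean way is to note that the natural functor $\T\to\Or_S(G)$ induces an isomorphism on inverse limits of any functor that is already defined on $\Or_S(G)$: morphisms in $\T$ differing by an element of $P$ (acting on the source) induce the same map on $H^*(-,M)$ because inner automorphisms act trivially on cohomology, so the limit over $\T$ only sees the $\Or_S(G)$-data; and over $\Or_S(G)$ the limit of $H^*(-,M)$ is exactly the stable elements, which is $H^*(G,M)$. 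Finally, one must check that $\T^e$ as defined here (objects: $S$ and the $\T_S^\Delta(G)$-essential subgroups) coincides with $\T_S^e(G)$ as stated in the corollary — this is just unwinding the definition, since $\Aut_\T(P)=N_G(P)/P\cdot C_G(P)$... more precisely $\Aut_{\T_S^\Delta(G)}(P)=N_G(P)$ as a set of automorphisms, and the $\T$-essential condition reduces to the usual $\Fc$-essentiality of $P$ (fully normalized plus $\Out_G(P)$ having a strongly $p$-embedded subgroup), up to the cosmetic difference of quotienting by $P$ versus by $P\cdot C_G(P)$, which does not affect the presence of a strongly $p$-embedded subgroup.

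Assembling these:
\[
H^*(G,M)\;\cong\;\limproj{\Or_S(G)}H^*(-,M)\;\cong\;\limproj{\T_S^\Delta(G)}H^*(-,M)\;\cong\;\limproj{\T_S^e(G)}H^*(-,M),
\]
where the first isomorphism is Cartan--Eilenberg stable elements, the second is the remark about inner automorphisms acting trivially, and the third is Corollary \ref{C1}.

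The main obstacle I expect is not any single deep step but the bookkeeping around the transition $\T\to\Or_S(G)$: one must be careful that $H^*(-,M)$ genuinely descends to a functor on the relevant category and that the inverse limit is insensitive to replacing the transporter category by the orbit category (equivalently, that the ``extra'' morphisms in $\T$ impose no new compatibility conditions beyond those of the orbit category). Once that is pinned down, the rest is an immediate concatenation of Cartan--Eilenberg with Corollary \ref{C1}, together with the routine verification that the two notions of essential subgroup agree.
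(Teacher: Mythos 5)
Your main chain of isomorphisms is correct and is essentially the paper's proof: the paper simply quotes the Cartan--Eilenberg stable-elements theorem in the form $H^*(G,M)\cong\limproj{\T_S(G)}H^*(-,M)$ and then applies Corollary \ref{C1}. Your detour through the orbit category --- justified by the facts that $H^*(-,M)$ factors through $\Or_S(G)$ (inner automorphisms act trivially on cohomology) and that the projection $\T_S(G)\to\Or_S(G)$ is bijective on objects and full, hence does not change inverse limits --- is just a more explicit account of the same reduction, so up to this point the two arguments coincide.

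However, your final ``routine verification'' contains a genuine error, albeit one that is not needed for the statement. It is not true that replacing $\Aut_\T(P)/P=N_G(P)/P$ by $\Out_G(P)=N_G(P)/PC_G(P)$ is cosmetic for the existence of a strongly $p$-embedded subgroup: this is precisely the distinction the paper is at pains to make when it drops the centricity requirement from Chermak's definition. In the paper's own example ($G=\Sigma_3$, $p=2$, $S=\langle(1,2)\rangle$), the trivial subgroup is $\T$-essential, since $N_G(1)/1=\Sigma_3$ has the strongly $2$-embedded subgroup $S$, while $\Out_G(1)=1$ has no strongly $2$-embedded subgroup; so $\T$-essential subgroups need not be $\Fc$-essential (they need not even be centric), and in general the two collections differ. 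Fortunately the identification is not required: $\T_S^e(G)$ in the corollary is, by definition, the full subcategory of $\T_S(G)$ on $S$ and the $\T$-essential subgroups in the paper's sense, so Corollary \ref{C1} applies verbatim. Once you delete the claimed equivalence with $\Fc$-essentiality, your argument is complete and agrees with the paper's.
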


\begin{proof}
By the Cartan-Eilenberg Theorem \cite[Chap XII, Theorem 10.1]{CE},
\[H^*(G,M)\cong \limproj{\T_S(G)} H^*(-,M).\]
Then we can apply Corollary \ref{C1}.
\end{proof}

A more general version was actually proved by Grodal in \cite[Corollary 10.4]{Gr} using deep algebraic topology. The proof we give here is algebraic and more elementary.

\bibliography{biblio}{}

\begin{thebibliography}{AKO11}

\bibitem[AKO]{AKO}
Michael Aschbacher, Radha Kessar, and Bob Oliver.
\newblock {\em Fusion systems in algebra and topology}, volume 391 of {\em
  London Mathematical Society Lecture Note Series}.
\newblock Cambridge University Press, Cambridge, 2011.

\bibitem[BLO2]{BLO2}
Carles Broto, Ran Levi, and Bob Oliver.
\newblock The homotopy theory of fusion systems.
\newblock {\em J. Amer. Math. Soc.}, 16(4):779--856, 2003.

\bibitem[CE]{CE}
Henri Cartan and Samuel Eilenberg.
\newblock {\em Homological algebra}.
\newblock Princeton Landmarks in Mathematics. Princeton University Press,
  Princeton, NJ, 1999.
\newblock With an appendix by David A. Buchsbaum, Reprint of the 1956 original.

\bibitem[Ch]{Ch}
Andrew Chermak.
\newblock Fusion systems and localities.
\newblock {\em Acta Math.}, 211(1):47--139, 2013.

\bibitem[Gr]{Gr}
Jesper Grodal.
\newblock Higher limits via subgroup complexes.
\newblock {\em Ann. of Math. (2)}, 155(2):405--457, 2002.

\bibitem[OV1]{OV1}
Bob Oliver and Joana Ventura.
\newblock Extensions of linking systems with {$p$}-group kernel.
\newblock {\em Math. Ann.}, 338(4):983--1043, 2007.

\bibitem[St]{St}
Bernd Stellmacher.
\newblock On {A}lperin's fusion theorem.
\newblock {\em Beitr\"age Algebra Geom.}, 35(1):95--99, 1994.
\newblock Festschrift on the occasion of the 65th birthday of Otto
  Kr{\"o}tenheerdt.

\end{thebibliography}
\bibliographystyle{abstract}

\end{document}